\theoremstyle{plain}
\newtheorem{theorem}{Theorem}[section]
\theoremstyle{definition}
\numberwithin{equation}{section}
\let\c@equation\c@theorem       
\begin{document}

\title[Pluripolarity of graphs of algebroid functions]{Pluripolarity of graphs of algebroid functions}

\author[Z. S.  Ibragimov]{Zafar S. Ibragimov}

\address{Department of Mathematics\\ Urgench State University\\ Urgench 220100, Uzbekistan}
\email{z.ibragim@gmail.com}
\urladdr{}

\date{May 3, 2010}

\thanks{}


\subjclass[2000]{Primary 32U}

\maketitle

Let $K\subset \mathbf C^n$ be an arbitrary compact set and let $f(z)$ be a continuous function on $K$. By $\rho_m(f,K)$ we denote the least
deviation of $f(z)$ on $K$ from the rational functions of degree less than or equal to $m$. That is,
$$
\rho_m(f,K)=\inf_{r_m}||f-r_m||_K,
$$
where $||-||_K$ is the uniform norm and the infimum is taken over all rational functions of the form
$$
r_m(z)=\frac {\underset{|\alpha|\leq m}\Sigma a_{\alpha} z^\alpha}{\underset{|\alpha|\leq m}\Sigma b_{\alpha} z^\alpha},\quad\text{where}\quad\alpha=(\alpha_1,\alpha_2,\dots,\alpha_n)\quad\text{is\ a\ multiindex}.
$$
As usual, we denote by $e_m(f,K)$ the least deviation of function $f(z)$ on $K$ from the polynomials of degree less than or equal to $m$. Obviously, $$
\rho_m(f,K)\leq e_m(f,K)\qquad\text{for\ each}\qquad m=1,2,\dots.
$$ 
In (\cite{G1}, \cite{G2}) Gonchar proved that if $K=[a,b]\subset\mathbf R\subset\mathbf C$, then the class of functions 
$$
R([a,b])=\{f\in C[a,b]\colon\, \underset{m\to\infty}{\underline{\lim}}\sqrt[m]{\rho_m(f,K)}<1\}
$$
possesses one of the important properties of the class of analytic functions. Namely, if 
$$
\underset{m\to\infty}{\underline{\lim}}\sqrt[m]{\rho_m(f,K)}<1\}
$$
and if $f(x)=0$ on a set $E\subset [a,b]$ of positive logarithmic capacity, then $f(x)\equiv 0$ on $[a,b]$ (see also \cite{P2}).

By analogy with the class  
$$
B(K)=\{f\in C(K)\colon\, \underset{m\to\infty}{\underline\lim}\sqrt[m]{e_m(f,K)}<1\},
$$
which is called the class of quasianalytic functions of Bernstein (\cite{T},\cite{P1},\cite{CLP}), we call 
$$
R(K)=\{f\in C(K)\colon\, \underset{m\to\infty}{\underline\lim}\sqrt[m]{\rho_m(f,K)}<1\}
$$
the class of quasianalytic functions of Gonchar. 

The classes $B(K)$ and $R(K)$ are not linear spaces; the sum of two quasianalytic functions are not, in general, quasianalytic (see [4]). We consider the following subclass $R_0(K)$ of the class $R(K)$:
$$
R_0(K)=\{f\in C(K)\colon\, \underset{m\to\infty}{\overline\lim}\sqrt[m]{\rho_m(f,K)}<1\}
$$ 
It is not hard to see that, if $f_1$ and $f_2$ belong to $R_0(K)$, then so are $c_1f_1+c_2f_2$ and $f_1\cdot f_2$, where $c_1$ and $c_2$ are arbitrary complex numbers.
	
In (\cite{DF}) K. Diederich and J.E. Fornass constructed an example of a smooth (infinitely differentiable) function, whose graph is not pluripolar in 
$\mathbf C^2$. Recently, D. Coman, N. Levenberg and E.A. Poletskiy have proved, that if $f\in B([a,b])$, then its graph $\Gamma_f$ is pluripolar in 
$\mathbf C^2$.
	
In (\cite{E}) A. Edigarian studied the following analogue of a theorem of N. Shcherbina [9]. Let $D$ be a domain in $\mathbf C^n$ and let $\Gamma\subset D\times\mathbf C$ be a graph of some algebroid function, i.e., 
$$
\Gamma=\Big\{(z,w)\in D\times\mathbf C\colon\, w^k+a_1(z)w^{k-1}+\dots +a_k(z)=0 \Big\},
$$
where $a_1(z), a_2(z),\dots, a_k(z)$ are continuous functions on $D$. Then $\Gamma$ is pluripolar in $\mathbf C^{n+1}$ if and only if the functions    $a_1(z), a_2(z),\dots, a_k(z)$ are holomorphic in $D$. In this paper we  prove a similar theorem on pluripolarity of graphs of algebroid functions in the class of quasianalytic functions.

\begin{theorem}
Let $[a,b]\subset\mathbf R\subset\mathbf C$ and let $\Gamma\subset\mathbf C^2$ be a graph of some algebroid function, i.e. 
$$
\Gamma=\Big\{(z,w)\in D\times\mathbf C\colon\, w^k+a_1(z)w^{k-1}+\dots +a_k(z)=0 \Big\},
$$
where $a_l\in R_0([a,b])$, $l=1,2,\dots,k$. Then $\Gamma$ is pluripolar in $\mathbf C^2$
\end{theorem}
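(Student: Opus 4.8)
The plan is to reduce the statement to the classical principle that geometric rational approximation forces meromorphic continuation, and then to exhibit an explicit holomorphic defining function for $\Gamma$ whose logarithm is the required plurisubharmonic weight. Since each $a_l\in R_0([a,b])$, by definition $\overline{\lim}_m\,\rho_m(a_l,[a,b])^{1/m}<1$, so I may fix a single $\theta<1$ and rational functions $r_m^{(l)}=p_m^{(l)}/q_m^{(l)}$ of degree $\le m$ with $\|a_l-r_m^{(l)}\|_{[a,b]}\le\theta^m$ for all large $m$ and all $l=1,\dots,k$. The first step is to show that each $a_l$ extends to a single-valued meromorphic function $\hat a_l$ on a fixed neighborhood $U\supset[a,b]$ in $\mathbf C$, with no poles on $[a,b]$ itself. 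I would obtain this from the telescoping identity $a_l=r_{m_0}^{(l)}+\sum_{m\ge m_0}(r_{m+1}^{(l)}-r_m^{(l)})$, observing that $\|r_{m+1}^{(l)}-r_m^{(l)}\|_{[a,b]}\le 2\theta^m$ while $r_{m+1}^{(l)}-r_m^{(l)}$ is rational of degree $\le 2m+2$, and then invoking a Bernstein--Walsh type growth estimate for rational functions (Gonchar's lemma): a rational function that is uniformly small on $[a,b]$ is correspondingly small, away from its poles, on the sublevel set $E_\rho=\{g<\log\rho\}$ of the Green function $g$ of $\mathbf C\setminus[a,b]$ with pole at infinity. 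For $\rho<1/\theta$ the series then converges geometrically on compact subsets of $E_\rho$ off the poles of its terms, yielding the continuation.

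The series converges only away from the poles of its terms, so the genuine difficulty is to show that these poles do not accumulate on $[a,b]$, i.e. that the limit is truly meromorphic on an \emph{open} neighborhood of $[a,b]$ with isolated poles lying in $U\setminus[a,b]$. This is precisely the content of Gonchar's inverse theorem for rational approximation: the $\overline{\lim}$ hypothesis defining $R_0$ (as opposed to the $\underline{\lim}$ defining $R$) guarantees geometric decay along the \emph{entire} sequence, which bounds the number of poles of $r_m^{(l)}$ in each $E_\rho$ $(\rho<1/\theta)$ uniformly in $m$, so that in the limit only finitely many poles occur in any compact subset of $E_\rho$. I expect this pole-localization to be the main obstacle; it is also the structural reason the theorem is stated for $R_0$ rather than for the larger quasianalytic class $R$, where such a continuation can genuinely fail.

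Granting the continuation, I would choose a neighborhood $U$ of $[a,b]$ on which all $\hat a_1,\dots,\hat a_k$ are meromorphic with finitely many poles, all off $[a,b]$, together with a function $h$ holomorphic on $U$ satisfying $h\not\equiv 0$, $h\ne 0$ on $[a,b]$, and clearing every one of these poles, so that $b_l:=h\,\hat a_l$ is holomorphic on $U$ for each $l$. Setting
\[
F(z,w)=h(z)\,w^k+b_1(z)\,w^{k-1}+\dots+b_k(z),
\]
the function $F$ is holomorphic on $U\times\mathbf C\subset\mathbf C^2$ and is not identically zero, since its leading coefficient in $w$ is $h\not\equiv 0$. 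Hence $u:=\log|F|$ is plurisubharmonic on $U\times\mathbf C$ and $u\not\equiv-\infty$. For every $(z,w)\in\Gamma$ one has $z\in[a,b]$, so $h(z)\ne 0$ and $F(z,w)=h(z)\bigl(w^k+a_1(z)w^{k-1}+\dots+a_k(z)\bigr)=0$, giving $\Gamma\subset\{u=-\infty\}$. Finally $\Gamma\subset[a,b]\times\mathbf C\subset U\times\mathbf C$ is locally pluripolar, hence pluripolar in all of $\mathbf C^2$ by Josefson's theorem. This reduces the entire argument to the continuation statement of the first two paragraphs, whose pole-control is the crux.
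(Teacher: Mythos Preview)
Your reduction to a meromorphic continuation of the coefficients is appealing, but the step you flag as the crux is not just the main obstacle---it is false as stated. The assertion that membership in $R_0([a,b])$ forces a meromorphic extension to a neighbourhood of $[a,b]$ with only finitely many poles (equivalently, that the poles of near-best free rational approximants stay bounded in number inside each $E_\rho$) is not a theorem of Gonchar; his inverse theorem concerns approximants of type $(m,n)$ with $n$ \emph{fixed}, and that is exactly what produces the bound of $n$ poles. For free type-$(m,m)$ approximation no such bound holds. A concrete counterexample on $[-1,1]$ is
\[
f(z)=\sum_{n\ge 1}\frac{2^{-n^2}}{\,z-(1+i/n)\,}.
\]
On the real interval one has $|z-(1+i/n)|\ge 1/n$, so the series converges uniformly and $f\in C([-1,1])$. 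The $N$-th partial sum is rational of degree $N$ with $\|f-s_N\|_{[-1,1]}\le\sum_{n>N}n\,2^{-n^2}$, hence $\rho_N(f,[-1,1])^{1/N}\to 0$ and $f\in R_0([-1,1])$. Yet the natural extension has simple poles at $1+i/n$ accumulating at $1$, and an easy analytic-continuation argument shows there is no holomorphic (or meromorphic) extension to any neighbourhood of $z=1$. In particular no holomorphic $h$ with $h(1)\ne 0$ can clear all the poles, so your construction of $F$ breaks down already for $k=1$.

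The paper avoids this trap entirely: it never continues the $a_l$ off $[a,b]$. Instead it observes that
\[
f(z,w)=w^k+a_1(z)w^{k-1}+\dots+a_k(z)
\]
lies in $R_0\bigl([a,b]\times\{|w|\le h\}\bigr)$ for every $h>0$, simply by plugging in the rational approximants of the $a_l$ and estimating. Then it invokes the several-variable uniqueness property of Gonchar quasianalytic functions (Ple\'sniak): a function in $R(K)$ that vanishes on a non-pluripolar subset of $K$ must vanish identically on $K$. Since $f$ is visibly not identically zero (its leading term in $w$ is $w^k$), its zero set in $[a,b]\times\{|w|\le h\}$ is pluripolar for every $h$, and $\Gamma$ is a countable union of such sets. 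The moral is that $R_0$ buys you a uniqueness principle strong enough to force pluripolarity of level sets, but not a meromorphic continuation; your plan would go through for the smaller class of restrictions to $[a,b]$ of genuine meromorphic germs, which is strictly contained in $R_0([a,b])$.
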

\begin{proof}   We consider the following function on $[a,b]\times\mathbf C$
$$
f(z,w)=w^k+a_1(z)w^{k-1}+\dots +a_k(z).
$$
Since $a_l\in R_0([a,b])$ for $l=1,2,\dots,k$, there exist a sequence of rational functions $r^{j}_{1}(z), r^{j}_{2}(z),\dots,r^{j}_{m}(z),\dots$ such that  
$$
\sqrt[m]{\rho_m(a_j,[a,b])}=\sqrt[m]{||a_j(z)-r^{j}_{m}(z)||_{[a,b]}}\leq\delta_j<1.
$$ 
The function $f(z,w)$ is quasianalytic in the sense of Gonchar on the compact set $[a,b]\times\{|w|\leq h\}\subset\mathbf C^{k+1}$, where $h$ is an arbitrary positive number. Indeed, 
\begin{equation*}
\begin{split}
& \rho_{m+k}\big(f,[a,b]\times\{|w|\leq h\}\big)\leq ||f(z,w)-w^k-\sum_{j=1}^{k}r^{j}_{m}(z)w^{k-j}||_{[a,b]\times\{|w|\leq h\}}\\
& \leq\sum_{j=1}^{k}||a_j(z)-r^{j}_{m}(z)||_{[a,b]}\cdot h^j\leq\sum_{j=1}^{k}h^j\delta^{m+k}_{j}\leq k\max\{h, h^k\}\delta^{m+k},
\end{split}
\end{equation*}
where $\delta=\max\{\delta_j\colon\, j=1,2,\dots,k\}$. It follows that
$$
\underset{m\to\infty}{\overline\lim}\rho^{1/(m+k)}_{m+k}\big(f,[a,b]\times\{|w|\leq h\}\big)\leq\delta<1.
$$
Consequently, the graph 
$$
\Gamma_f=\big\{(z,w,f(z))\colon\, (z,w)\in [a,b]\times\mathbf C\big\}
$$
of the function $f(z,w)$ is pluripolar in $\mathbf C^3$. Now we consider sections
$$
\Gamma_f(\lambda)=\big\{(z,w,f(z,w))\colon\, f(z,w)=\lambda \big\}.
$$
For each $\lambda$, the section $\Gamma_f(\lambda)$  is pluripolar in $\mathbf C^2$. (Indeed, if the graph $\Gamma_f(\lambda)$ is nonpluripolar for some $\lambda\in\mathbf C$, then according to the uniqueness property of quasianalytic functions, the function $f(z,w)$ is identically equal to $\lambda$, which contradicts the definition of $f(z,w)$). In particular, we obtain the pluripolarity of the graph 
 $$
\Gamma=\Big\{(z,w)\in D\times\mathbf C\colon\, w^k+a_1(z)w^{k-1}+\dots +a_k(z)=0 \Big\}
$$
of algebroid functions. The proof of the theorem is complete. 
\end{proof}

\end{document}